\newcommand{\N}{\mathbb N}
\newcommand{\C}{\mathbb C}
\newcommand{\Hil}{\mathscr H}
\newcommand{\Cpt}{\mathscr K}
\newcommand{\sspan}{\ensuremath{\operatorname{span}}}
\newcommand{\linbeg}{\boldsymbol{\operatorname{B}}}
\newcommand{\cb}{\mathrm{cb}}
\newcommand{\dd}{\mathrm d}
\newcommand{\ELL}{{L}}
\newcommand{\ip}[2]{\langle#1,#2\rangle}
\newcommand{\seta}[1]{\ensuremath{\{#1\}}}
\newcommand{\setw}[2]{\setaw{#1}{#2}}
\newcommand{\setaw}[2]{\seta{#1\::\:#2}}
\newcommand{\Email}{\begingroup \def\UrlLeft{<}\def\UrlRight{>} \urlstyle{tt}\Url}
\newcommand{\mailto}[1]{\href{mailto:#1}{\Email{#1}}}
\newcommand{\contrib}[3]{#1\quad\mailto{#2}{\small\\\quad\textit{#3}}\\[1ex]}
\newcommand{\FA}{A}
\newcommand{\FSA}{B}
\newcommand{\cc}{c}
\newcommand{\G}{G}
\newcommand{\Y}{Y}
\newcommand{\MA}{MA}
\newcommand{\MoA}{M_0A}
\numberwithin{equation}{section}
\theoremstyle{plain}
\newtheorem{theorem}{Theorem}[section]
\newtheorem{maintheorem}[theorem]{Theorem}
\newtheorem{proposition}[theorem]{Proposition}
\newtheorem{lemma}[theorem]{Lemma}
\newtheorem{corollary}[theorem]{Corollary}
\theoremstyle{definition}
\theoremstyle{remark}
\newtheorem{remark}[theorem]{Remark}
\renewcommand{\Re}{\mathrm{Re}}
\renewcommand{\Im}{\mathrm{Im}}
\renewcommand{\contrib}[2]{#1\quad\mailto{#2}}
\begin{document}
	\hyphenation{hil-bert-space fran-ces-co lo-rentz boun-ded func-tions haa-ge-rup pro-per-ties a-me-na-bi-li-ty re-wri-ting pro-per-ty tro-els steen-strup jen-sen de-note as-su-ming Przy-by-szew-ska ma-jo-ri-za-tion o-pe-ra-tors mul-ti-pli-ers i-so-mor-phism la-place bel-tra-mi re-pre-sen-ta-tions pro-po-si-tion the-o-rem lem-ma co-rol-la-ry}

	\title{Herz--Schur Multipliers and Non-Uniformly Bounded Representations of Locally Compact Groups}
\author{Troels Steenstrup\thanks{Partially supported by the Ph.D.-school OP--ALG--TOP--GEO.}}
\date{}
\maketitle

	\begin{abstract}
		Let $\G$ be a second countable, locally compact group and let $\varphi$ be a continuous Herz--Schur multiplier on $\G$. Our main result gives the existence of a (not necessarily uniformly bounded) strongly continuous representation $\pi$ of $\G$ on a Hilbert space $\Hil$, together with vectors $\xi,\eta\in\Hil$, such that $\varphi(y^{-1}x)=\ip{\pi(x)\xi}{\pi(y^{-1})^*\eta}$ for $x,y\in\G$ and $\sup_{x\in\G}\|\pi(x)\xi\|\cdot\sup_{y\in\G}\|\pi(y^{-1})^*\eta\|=\|\varphi\|_{\MoA(\G)}$. Moreover, we obtain control over the growth of the representation in the sense that $\|\pi(g)\|\leq\exp(\tfrac{c}{2}\dd(g,e))$ for $g\in\G$, where $e\in\G$ is the identity element, $c$ is a constant and $\dd$ is a metric on $\G$.

	\end{abstract}
	\section*{Introduction}
\label{introCoeff}
Let $\Y$ be a non-empty set. A function $\psi:\Y\times\Y\to\C$ is called a \emph{Schur multiplier} if for every operator $A=(a_{x,y})_{x,y\in\Y}\in\linbeg(\ell^2(\Y))$ the matrix $(\psi(x,y)a_{x,y})_{x,y\in\Y}$ again represents an operator from $\linbeg(\ell^2(\Y))$ (this operator is denoted by $M_\psi A$). If $\psi$ is a Schur multiplier it follows easily from the closed graph theorem that $M_\psi\in\linbeg(\linbeg(\ell^2(\Y)))$, and one refers to $\|M_\psi\|$ as the \emph{Schur norm} of $\psi$ and denotes it by $\|\psi\|_S$.

Let $G$ be a locally compact group. 
In~\cite{Her:UneGeneralisationDeLaNotionDetransformeeDeFourier-Stieltjes}, Herz introduced a class of functions on $\G$, which was later denoted the class of \emph{Herz--Schur multipliers} on $\G$. By the introduction to~\cite{BF:Herz-SchurMultipliersAndCompletelyBoundedMultipliersOfTheFourierAlgebraOfALocallyCompactGroup}, a continuous function $\varphi:\G\to\C$ is a Herz--Schur multiplier if and only if the function
\begin{equation}
	\label{new0.3}
	\hat\varphi(x,y)=\varphi(y^{-1}x)\qquad(x,y\in\G)
\end{equation}
is a Schur multiplier, and the \emph{Herz--Schur norm} of $\varphi$ is given by
\begin{equation*}
	\|\varphi\|_{HS}=\|\hat\varphi\|_S.
\end{equation*}

In~\cite{DCH:MultipliersOfTheFourierAlgebrasOfSomeSimpleLieGroupsAndTheirDiscreteSubgroups} De Canni{\`e}re and Haagerup introduced the Banach algebra $\MA(G)$ of \emph{Fourier multipliers} of $G$, consisting of functions $\varphi:\G\to\C$ such that
\begin{equation*}
	\varphi\psi\in\FA(\G)\qquad(\psi\in\FA(\G)),
\end{equation*}
where $\FA(\G)$ is the \emph{Fourier algebra} of $\G$ as introduced by Eymard in~\cite{Eym:L'alg`ebreDeFourierD'unGroupeLocalementCompact} (the \emph{Fourier--Stieltjes algebra} $\FSA(\G)$ of $\G$ is also introduced in this paper). The norm of $\varphi$ (denoted \emph{$\|\varphi\|_{\MA(\G)}$}) is given by considering $\varphi$ as an operator on $\FA(\G)$. According to~\cite[Proposition~1.2]{DCH:MultipliersOfTheFourierAlgebrasOfSomeSimpleLieGroupsAndTheirDiscreteSubgroups} a Fourier multiplier of $G$ can also be characterized as a continuous function $\varphi:\G\to\C$ such that
\begin{equation*}
	\lambda(g)\stackrel{M_\varphi}{\mapsto}\varphi(g)\lambda(g)\qquad(g\in\G)
\end{equation*}
extends to a $\sigma$-weakly continuous operator (still denoted $M_\varphi$) on the group von Neumann algebra ($\lambda:\G\to\linbeg(\ELL^2(\G))$ is the \emph{left regular representation} and the group von Neumann algebra is the closure of the span of $\lambda(\G)$ in the weak operator topology). Moreover, one has $\|\varphi\|_{\MA(\G)}=\|M_\varphi\|$. The Banach algebra $\MoA(\G)$ of \emph{completely bounded Fourier multipliers} of $G$ consists of the Fourier multipliers of $G$, $\varphi$, for which $M_\varphi$ is completely bounded. In this case they put \emph{$\|\varphi\|_{\MoA(\G)}=\|M_\varphi\|_{\cb}$}.

In~\cite{BF:Herz-SchurMultipliersAndCompletelyBoundedMultipliersOfTheFourierAlgebraOfALocallyCompactGroup} Bo{\.z}ejko and Fendler show that the completely bounded Fourier multipliers coincide isometrically with the continuous Herz--Schur multipliers. In~\cite{Jol:ACharacterizationOfCompletelyBoundedMultipliersOfFourierAlgebras} Jolissaint gives a short and self-contained proof of the result from~\cite{BF:Herz-SchurMultipliersAndCompletelyBoundedMultipliersOfTheFourierAlgebraOfALocallyCompactGroup} in the form stated below.
\begin{proposition}[\cite{BF:Herz-SchurMultipliersAndCompletelyBoundedMultipliersOfTheFourierAlgebraOfALocallyCompactGroup},~\cite{Jol:ACharacterizationOfCompletelyBoundedMultipliersOfFourierAlgebras}]
	\label{Gilbert0}
	Let $G$ be a locally compact group and assume that $\varphi:G\to\C$ and $k\geq0$ are given, then the following are equivalent:
	\begin{itemize}
		\item [(i)]$\varphi$ is a completely bounded Fourier multiplier of $\G$ with $\|\varphi\|_{\MoA(G)}\leq k$.
		\item [(ii)]$\varphi$ is a continuous Herz--Schur multiplier on $\G$ with $\|\varphi\|_{HS}\leq k$.
		\item [(iii)]There exists a Hilbert space $\Hil$ and two bounded, continuous maps $P,Q:G\to\Hil$ such that
		\begin{equation*}
			\varphi(y^{-1}x)=\ip{P(x)}{Q(y)}\qquad(x,y\in G)
		\end{equation*}
		and
		\begin{equation*}
			\|P\|_\infty\|Q\|_\infty\leq k,
		\end{equation*}
		where
		\begin{equation*}
			\|P\|_\infty=\sup_{x\in G}\|P(x)\|\quad\mbox{and}\quad\|Q\|_\infty=\sup_{y\in G}\|Q(y)\|.
		\end{equation*}
	\end{itemize}
\end{proposition}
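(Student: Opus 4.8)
The plan is to establish the Proposition as the conjunction of two isometric equivalences: $(\mathrm{ii})\Leftrightarrow(\mathrm{iii})$, which is Gilbert's characterization of Schur multipliers and involves only the set $\Y=\G$, and $(\mathrm{i})\Leftrightarrow(\mathrm{ii})$, which is a transference relating the Schur multiplier $\hat\varphi$ on $\linbeg(\ell^2(\G))$ to the Fourier multiplier $M_\varphi$ on the group von Neumann algebra inside $\linbeg(\ELL^2(\G))$. I would track the constant $k$ through every inequality, so that the three quantities $\|\varphi\|_{\MoA(\G)}$, $\|\varphi\|_{HS}=\|\hat\varphi\|_S$, and $\inf\|P\|_\infty\|Q\|_\infty$ are pinched between one another and hence coincide.

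For the easy half of Gilbert, $(\mathrm{iii})\Rightarrow(\mathrm{ii})$, suppose $P,Q\colon\G\to\Hil$ are bounded with $\hat\varphi(x,y)=\varphi(y^{-1}x)=\ip{P(x)}{Q(y)}$. I would write, for $A\in\linbeg(\ell^2(\G))$,
\begin{equation*}
	M_{\hat\varphi}(A)=R\,(A\otimes 1_{\Hil})\,C,
\end{equation*}
where $C\colon\ell^2(\G)\to\ell^2(\G)\otimes\Hil$ sends $e_y\mapsto e_y\otimes\overline{Q(y)}$ and $R$ is the adjoint of the map $e_x\mapsto e_x\otimes\overline{P(x)}$; a one-line computation confirms this identity. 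Since $C^{\adjoint}C$ and $RR^{\adjoint}$ are the diagonal operators with entries $\|Q(y)\|^2$ and $\|P(x)\|^2$, one gets $\|C\|=\|Q\|_\infty$ and $\|R\|=\|P\|_\infty$, whence $\|\hat\varphi\|_S=\|M_{\hat\varphi}\|\le\|P\|_\infty\|Q\|_\infty\le k$.

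The substantive half is $(\mathrm{ii})\Rightarrow(\mathrm{iii})$. Here I would use that $M_{\hat\varphi}$ is a normal module map over the diagonal masa $D\subseteq\linbeg(\ell^2(\G))$, since multiplication by $\hat\varphi$ acts entrywise and therefore commutes with left and right multiplication by diagonal operators. By Smith's theorem a bounded $D$-module map into $\linbeg(\ell^2(\G))$ is automatically completely bounded with $\|M_{\hat\varphi}\|_{\cb}=\|M_{\hat\varphi}\|=\|\hat\varphi\|_S$, and the structure theorem for normal completely bounded $D$-bimodule maps produces a factorization $M_{\hat\varphi}(A)=\tilde R\,(A\otimes 1_{\Hil})\,\tilde C$ in which $\tilde C$ and $\tilde R^{\adjoint}$ are \emph{diagonal}. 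Reading off their diagonal vectors defines $Q(y),P(x)\in\Hil$ with $\ip{P(x)}{Q(y)}=\varphi(y^{-1}x)$ and $\|P\|_\infty\|Q\|_\infty=\|\tilde R\|\,\|\tilde C\|=\|\hat\varphi\|_S\le k$. It remains to upgrade the a priori only bounded fields $P,Q$ to \emph{continuous} ones; here I would convolve each field with an approximate identity on $\G$, exploiting the group structure and the continuity of $\varphi$, so as to replace $P,Q$ by continuous maps without changing $\varphi$ or enlarging the norms.

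Finally I turn to the transference $(\mathrm{i})\Leftrightarrow(\mathrm{ii})$. The inequality $\|\varphi\|_{\MoA(\G)}\le\|\hat\varphi\|_S$ is the easy one: realizing the group von Neumann algebra in $\linbeg(\ELL^2(\G))$, an element $\sum_g c_g\lambda(g)$ has $(x,y)$ matrix coefficient that is a function of $y^{-1}x$ alone (in the realisation compatible with the convention $\hat\varphi(x,y)=\varphi(y^{-1}x)$), and the entrywise multiplier $M_{\hat\varphi}$ carries it to the matrix of $M_\varphi\bigl(\sum_g c_g\lambda(g)\bigr)$; thus $M_\varphi$ is the restriction of the Schur multiplier $M_{\hat\varphi}$ to the von Neumann algebra, and restriction does not increase the completely bounded norm. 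The reverse inequality $\|\hat\varphi\|_S\le\|\varphi\|_{\MoA(\G)}$ is the main obstacle. I would obtain it by extending the completely bounded multiplier $M_\varphi$ from the von Neumann algebra to \emph{all} of $\linbeg(\ELL^2(\G))$ as a completely bounded $\ELL^\infty(\G)$-bimodule map, via Wittstock's bimodule extension theorem, with no increase in completely bounded norm; any such bimodule map over the multiplication masa is automatically a Schur multiplier, and checking that its symbol is forced to be $\hat\varphi$ then yields the claim. The delicate points—verifying that $M_\varphi$ is compatible with the $\ELL^\infty(\G)$-bimodule structure so that Wittstock's theorem applies, and identifying the extended symbol—are exactly where the content of the Bo{\.z}ejko--Fendler theorem sits, and crucially this route uses no amenability assumption on $\G$. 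Combining the two isometric equivalences gives the equality of all three norms and completes the proof.
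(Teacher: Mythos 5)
Two preliminary remarks: the paper itself offers no proof of Proposition~\ref{Gilbert0} --- it is imported verbatim from Bo{\.z}ejko--Fendler and Jolissaint --- so your attempt has to stand entirely on its own; and it does not, because both of its nontrivial halves break down precisely at the points where the locally compact (non-discrete) case differs from the discrete one.

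The central gap is the continuity upgrade in (ii)$\Rightarrow$(iii). Condition (ii) concerns $\linbeg(\ell^2(\G))$ with $\G$ treated as a \emph{discrete} index set, so the maps $P,Q$ you extract from the masa-bimodule structure theorem are merely bounded functions on $\G$ carrying no Haar-measurability whatsoever; the proposed repair ``convolve each field with an approximate identity'' is therefore not even defined (there is no Bochner integral of a non-measurable field). Worse, even granting measurability, convolution gives $\ip{(P*u)(x)}{(Q*u)(y)}=\int\!\!\int u(s)u(t)\,\varphi(t^{-1}y^{-1}xs)\,\dd\mu(s)\dd\mu(t)$, a two-sided mollification of $\varphi$ rather than $\varphi$ itself, so the step ``without changing $\varphi$'' is false; and recovering $\varphi$ by letting $u\to\delta_e$ (say via an ultraproduct of the factorizations) destroys continuity again. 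Note that \emph{norm} continuity is exactly what is needed downstream (Lemma~\ref{newrep} applies dominated convergence to $\|R(z^{-1}x_n)-R(z^{-1}x)\|$); weak continuity, which one can indeed get for free by projecting each field onto the closed span of the other and using continuity of $\varphi$, would not suffice. The known proof (Jolissaint) avoids this entirely by running the cycle the other way: one first establishes (i), then applies the representation theorem for \emph{normal} completely bounded maps on the group von Neumann algebra to write $M_\varphi(T)=V^*\pi(T)W$ with $\pi$ a normal $*$-representation, and sets $P(x)=\pi(\lambda(x))W\lambda(x)^*f$, $Q(y)=\pi(\lambda(y))V\lambda(y)^*f$ for a unit vector $f\in\ELL^2(\G)$; the identity $\ip{P(x)}{Q(y)}=\varphi(y^{-1}x)$ is a two-line computation, and norm continuity is automatic because $\lambda$ and $\pi\circ\lambda$ are strongly continuous.

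The second gap is your ``easy'' inequality $\|\varphi\|_{\MoA(\G)}\le\|\hat\varphi\|_S$. Your argument treats an element $\sum_g c_g\lambda(g)$ of the group von Neumann algebra as a matrix indexed by $\G\times\G$ on which $M_{\hat\varphi}$ acts entrywise. That picture is correct only for discrete $\G$: in general the von Neumann algebra acts on $\ELL^2(\G,\mu)$, whereas $M_{\hat\varphi}$ acts on $\linbeg(\ell^2(\G))$ over the discrete set $\G$, and $\lambda(g)$ has a singular integral kernel, not matrix entries; ``restriction of the Schur multiplier to the von Neumann algebra'' is meaningless here. The clean route to (i) is instead from (iii): with \emph{continuous} (hence measurable) $P,Q$ one defines $W,V:\ELL^2(\G)\to\ELL^2(\G)\otimes\Hil$ by $(W\xi)(x)=\xi(x)\otimes P(x^{-1})$, $(V\eta)(x)=\eta(x)\otimes Q(x^{-1})$ and verifies $M_\varphi(T)=V^*(T\otimes\unit)W$ --- but this again presupposes the continuity you have not produced. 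Finally, two technical objections to the tools you invoke: Smith's automatic complete boundedness requires a cyclic vector for the masa, which $\ell^2(\G)$ lacks for uncountable $\G$ (repairable, since bounded Schur multipliers are completely bounded over arbitrary index sets by a direct argument), and Wittstock's bimodule extension theorem does not apply as stated, because the group von Neumann algebra is not an $\ELL^\infty(\G)$-sub-bimodule of $\linbeg(\ELL^2(\G))$; making that strategy work requires a scalar extension followed by an averaging argument to restore bimodularity, which is a substantial missing step rather than a checkable detail.
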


By a \emph{representation} $(\pi,\Hil)$ of a locally compact group $\G$ on a Hilbert space $\Hil$ we mean a homomorphism of $\G$ into the invertible elements of $\linbeg(\G)$. A representation $(\pi,\Hil)$ of $\G$ is said to be \emph{uniformly bounded} if
\begin{equation*}
	\sup_{g\in\G}\|\pi(g)\|<\infty
\end{equation*}
and one usually writes $\|\pi\|$ for $\sup_{g\in\G}\|\pi(g)\|$. If $g\mapsto\pi(g)$ is continuous with respect to the strong operator topology on $\linbeg(\G)$ then we say that $(\pi,\Hil)$ is \emph{strongly continuous}.
Let $(\pi,\Hil)$ be a strongly continuous, uniformly bounded representation of $\G$ then, according to~\cite[Theorem~2.2]{DCH:MultipliersOfTheFourierAlgebrasOfSomeSimpleLieGroupsAndTheirDiscreteSubgroups}, any \emph{coefficient of $(\pi,\Hil)$} is a continuous Herz--Schur multiplier, i.e.,
\begin{equation*}
	g\stackrel{\varphi}{\mapsto}\ip{\pi(g)\xi}{\eta}\qquad(g\in\G)
\end{equation*}
is a continuous Herz--Schur multiplier with
\begin{equation*}
	\|\varphi\|_{\MoA(\G)}\leq\|\pi\|^2\|\xi\|\|\eta\|
\end{equation*}
for any $\xi,\eta\in\Hil$ (note that this result also follows as a corollary to Proposition~\ref{Gilbert0}).
U.~Haagerup has shown that on the non-abelian free groups there are Herz--Schur multipliers which can not be realized as coefficients of uniformly bounded representations. The proof by Haagerup has remained unpublished, but Pisier has later given a different proof, cf.~\cite{Pis:AreUnitarizableGroupsAmenable?}.
Haagerup's proof can be modified to prove the corresponding result for the connected, real rank one, simple Lie groups with finite center, cf.~\cite[Theorem~3.6]{Ste:CompletelyBoundedMultiplierNormOfSphericalFunctionsOnTheGeneralizedLorentzGroups}.

Strictly speaking, the requirement that the above representations be uniformly bounded is not fully needed in order to construct a continuous Herz--Schur multiplier. From Proposition~\ref{Gilbert0} it follows that it is enough to require that
\begin{equation}
	\label{unifbdreplacement}
	\sup_{x\in\G}\|\pi(x)\xi\|<\infty\quad\mbox{and}\quad\sup_{y\in\G}\|\pi(y^{-1})^*\eta\|<\infty.
\end{equation}
In~\cite[Theorem~1.1]{BF:Herz-SchurMultipliersAndUniformlyBoundedRepresentationsOfDiscreteGroups} Bo{\.z}ejko and Fendler shows that, for countable discrete groups, all Herz--Schur multipliers can be realized as coefficients of representations satisfying a condition similar to~\eqref{unifbdreplacement}. More specifically, they show that if $\varphi$ is a Hermitian Herz--Schur multiplier on a countable discrete group $\Gamma$, then there exists a representation $(\pi,\Hil)$ and vectors $\xi,\eta\in\Hil$ such that
\begin{equation*}
	\varphi(y^{-1}x)=\ip{\pi(x)\xi}{\pi(y)\eta}\qquad(x,y\in\Gamma),
\end{equation*}
with
\begin{equation*}
	\sup_{x\in\Gamma}\|\pi(x)\xi\|\leq\|\varphi\|_{\MoA(\Gamma)}^{\tfrac{1}{2}}\quad\mbox{and}\quad\sup_{y\in\Gamma}\|\pi(y)\eta\|\leq\|\varphi\|_{\MoA(\Gamma)}^{\tfrac{1}{2}}.
\end{equation*}
Furthermore, they give a quantitative bound on $\|\pi(g)\|$ for $g\in\Gamma$ and note that the same holds for non-Hermitian Herz--Schur multipliers by including a square root two factor in the bound for $\sup_{x\in\Gamma}\|\pi(x)\xi\|$ and $\sup_{y\in\Gamma}\|\pi(y)\eta\|$. In section~\ref{coeffOfNonUnifBd} we present a generalization of the result by Bo{\.z}ejko and Fendler to second countable, locally compact groups (cf.~Theorem~\ref{mt} and Corollary~\ref{mtc}). We also present the following theorem based on criterion~\eqref{unifbdreplacement} (cf.~Theorem~\ref{mt2}):
\begin{theorem}
	Let $\G$ be a second countable, locally compact group and let $\dd$ be a proper, left invariant metric on $\G$ which has \emph{at most exponential growth}, i.e.,
	\begin{equation*}
		\mu(B_n(e))\leq a\cdot e^{b n}\qquad(n\in\N)
	\end{equation*}
	for some constants $a,b>0$, where $\mu$ is a left invariant Haar measure on $\G$ and $B_n(e)=\setw{g\in\G }{ \dd(g,e)<n }$ is the open ball of radius $n$ centered around the identity element $e\in\G$. Then for any continuous Herz--Schur multiplier $\varphi$ on $\G$ there exists a strongly continuous representation $(\pi,\Hil)$ and vectors $\xi,\eta\in\Hil$ such that
	\begin{equation*}
		\varphi(y^{-1}x)=\ip{\pi(x)\xi}{\pi(y^{-1})^*\eta}\qquad(x,y\in\G),
	\end{equation*}
	with
	\begin{equation*}
		\sup_{x\in\G}\|\pi(x)\xi\|=\|\varphi\|_{\MoA(\G)}^{\tfrac{1}{2}}\quad\mbox{and}\quad\sup_{y\in\G}\|\pi(y^{-1})^*\eta\|=\|\varphi\|_{\MoA(\G)}^{\tfrac{1}{2}}.
	\end{equation*}
	Moreover, for every fixed $c>b$, $(\pi,\Hil)$ can be chosen such that
	\begin{equation*}
		\|\pi(g)\|\leq e^{\tfrac{c}{2}\cdot\dd(g,e)}\qquad(g\in\G).
	\end{equation*}
\end{theorem}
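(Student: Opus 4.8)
The plan is to start from the Gilbert-type factorisation furnished by Proposition~\ref{Gilbert0} and then \emph{manufacture} a representation by letting $\G$ act by (weighted) translation on a space of $\cH$-valued functions on $\G$, using the weight to control the operator norm. First I would apply Proposition~\ref{Gilbert0}, in the implication (ii)$\Rightarrow$(iii), to the continuous Herz--Schur multiplier $\varphi$, obtaining a Hilbert space $\cH$ and bounded continuous maps $P,Q\colon\G\to\cH$ with $\varphi(y^{-1}x)=\ip{P(x)}{Q(y)}$; after rescaling I may assume $\|P\|_\infty=\|Q\|_\infty=\|\varphi\|_{\MoA(\G)}^{1/2}$. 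The identity that drives the construction is that, since $\ip{P(x)}{Q(y)}$ depends only on $y^{-1}x$, one has $\ip{P(zx)}{Q(zy)}=\varphi(y^{-1}x)$ for \emph{every} $z\in\G$; hence averaging the pointwise pairing of two translated copies of $P$ and $Q$ against \emph{any} probability measure on $\G$ returns exactly $\varphi(y^{-1}x)$.

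To obtain simultaneously such a probability measure and the norm bound I would set $\rho(g)=\dd(g,e)$ and introduce the weight $e^{-c\rho}$. Here the at-most-exponential-growth hypothesis enters in exactly one place: because $\mu(B_n(e))\le a\,e^{bn}$ and $c>b$, a dyadic decomposition of $\G$ into the shells $B_{n+1}(e)\setminus B_n(e)$ shows that $\int_\G e^{-c\rho}\,\dd\mu$ converges, so that $\dd\nu=e^{-c\rho}\,\dd\mu$, suitably normalised, is a probability measure. On $\Hil=L^2(\G,\nu;\cH)$ I would let $\G$ act by translation, $\pi$. The left invariance of $\dd$ gives $\dd(zg',z)=\dd(g',e)$ for all $z,g'$, which bounds the weight ratio appearing after the change of variable in $\|\pi(g)f\|$ by $e^{c\dd(g,e)}$ and hence yields $\|\pi(g)\|\le\exp(\tfrac{c}{2}\dd(g,e))$; strong continuity follows from continuity of translation and dominated convergence. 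Taking $\xi=P$ and $\eta=Q$ (legitimate, since $\nu$ is finite and $P,Q$ are bounded), the averaging identity gives $\varphi(y^{-1}x)=\ip{\pi(x)\xi}{\pi(y^{-1})^*\eta}$, equivalently $\varphi(g)=\ip{\pi(g)\xi}{\eta}$, and the forward bound $\sup_{x}\|\pi(x)\xi\|\le\|P\|_\infty=\|\varphi\|_{\MoA(\G)}^{1/2}$ is immediate. The two desired \emph{equalities} then come for free once the two upper bounds are in hand: the maps $x\mapsto\pi(x)\xi$ and $y\mapsto\pi(y^{-1})^*\eta$ are themselves a Gilbert factorisation of $\varphi$, so Proposition~\ref{Gilbert0} forces $\|\varphi\|_{\MoA(\G)}\le\sup_{x}\|\pi(x)\xi\|\cdot\sup_{y}\|\pi(y^{-1})^*\eta\|$, which together with the two upper bounds pins both suprema to $\|\varphi\|_{\MoA(\G)}^{1/2}$.

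The step I expect to be the main obstacle is the \emph{second} orbit bound, $\sup_{y}\|\pi(y^{-1})^*\eta\|<\infty$. A generic continuous Herz--Schur multiplier is not a coefficient of a uniformly bounded (a fortiori unitary) representation, so $\pi$ must be genuinely non-unitary, and the very weight that tames $\|\pi(g)\|$ acts \emph{against} the adjoint orbit: a direct computation shows that for the naive choice above $\|\pi(y^{-1})^*\eta\|$ grows like $\exp(\tfrac{c}{2}\dd(y,e))$, and no re-choice of $\eta$ representing the same functional repairs this, because $\pi(y^{-1})^*$ is the ``uphill'' translation. The realisation and the growth bound are therefore cheap; the difficulty is to secure \emph{both} orbit norms at once. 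I would resolve this by symmetrising the construction in the spirit of Bo{\.z}ejko--Fendler: first reduce to the Hermitian case, where $P$ and $Q$ (and hence the two orbits) may be taken to coincide, treat the general case by the standard splitting that costs only a bounded factor, and replace plain translation by a more symmetric representation---built, for instance, on the reproducing-kernel/GNS space attached to the positive $2\times2$ Herz--Schur kernel---for which both $\pi$ and $g\mapsto\pi(g^{-1})^*$ carry bounded orbits, before re-running the weighting argument to reinstate the bound $\|\pi(g)\|\le\exp(\tfrac{c}{2}\dd(g,e))$.

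A secondary, purely technical, point is the bookkeeping between the left Haar measure and left-invariant metric in which the hypothesis is phrased and the right translation for which the weight estimate is cleanest. The clean substitution $z\mapsto zg^{-1}$ uses right invariance of the underlying Haar measure, whereas the growth is stated for a left Haar measure $\mu$; I would carry the modular function $\Delta$ through the change of variables, noting that $\dd\mu_r=\Delta^{-1}\,\dd\mu$, so that convergence of $\int e^{-c\rho}\,\dd\mu_r$ and the resulting norm bound follow from the stated growth condition (with no change on a unimodular group and only a shift of constants in general). None of this affects the final statement, and the equalities of the suprema and the growth estimate hold as asserted.
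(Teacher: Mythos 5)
Your scaffolding is sound and matches the paper's: the Gilbert factorization from Proposition~\ref{Gilbert0}, the weight $e^{-c\,\dd(\cdot,e)}$ made integrable by a shell decomposition and the hypothesis $c>b$ (the paper's Lemma~\ref{lemma}), the weighted-translation representation with $\|\pi(g)\|\leq e^{\frac{c}{2}\dd(g,e)}$ and strong continuity via dominated convergence (the paper's Lemma~\ref{newrep}), and the correct observation that the two \emph{equalities} follow automatically from the two sharp upper bounds by feeding $x\mapsto\pi(x)\xi$, $y\mapsto\pi(y^{-1})^*\eta$ back into Proposition~\ref{Gilbert0}. One convention is not cosmetic, however. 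The paper sets $P'(x)(z)=\sqrt{h(z)}P(z^{-1}x)$, so the group acts by \emph{left} weighted translation and only left invariance of $\mu$ and of $\dd$ is used; no modular function appears. In your variant the translated copies are $z\mapsto P(zx)$ and the action is right translation, and this is not repaired by ``carrying $\Delta$ through'': a left-invariant metric gives no control on $\dd(zg,z)$, so the weight ratio $h(zg)/h(z)$ is unbounded in $z$ in general, and the bound $\|\pi(g)\|\leq e^{\frac{c}{2}\dd(g,e)}$ for \emph{every fixed} $c>b$ is lost, not merely shifted by constants.

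The genuine gap is in your resolution of the difficulty you correctly isolate, namely $\sup_y\|\pi(y^{-1})^*\eta\|<\infty$. Reducing to Hermitian $\varphi$ and symmetrizing in the Bo{\.z}ejko--Fendler style is precisely the route to the paper's Theorem~\ref{mt} and Corollary~\ref{mtc}, and it cannot prove the present statement: it produces coefficients of the form $\ip{\pi(x)\xi}{\pi(y)\eta}$, which is not the required form $\ip{\pi(x)\xi}{\pi(y^{-1})^*\eta}$ since $\pi$ is not unitary (the required form is equivalent to $\varphi(g)=\ip{\pi(g)\xi}{\eta}$, a true matrix coefficient), and for non-Hermitian $\varphi$ the splitting $\varphi=\Re(\varphi)+i\Im(\varphi)$ costs a factor $\sqrt{2}$ in each orbit bound, which destroys exactly the sharp upper bounds your own endgame needs in order to force the equalities. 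What is missing is the paper's projection trick, which needs no Hermitian reduction at all: build the representation from $P$ \emph{alone}, on the cyclic space $\Cpt_P=\overline{\sspan}\setw{P'(x)}{x\in\G}$, and take $\xi=P'(e)$, $\eta=P_{\Cpt_P}Q'(e)$, where $P_{\Cpt_P}$ is the orthogonal projection onto $\Cpt_P$. The averaging identity $\ip{P'(x)}{Q'(y)}=\varphi(y^{-1}x)$ then gives, for all $x$,
\begin{equation*}
\ip{\pi_P(y^{-1})P'(x)}{\eta}=\varphi(y^{-1}x)=\ip{P'(x)}{P_{\Cpt_P}Q'(y)},
\end{equation*}
hence $\pi_P(y^{-1})^*\eta=P_{\Cpt_P}Q'(y)$: the ``uphill'' adjoint orbit is tamed not by symmetrization but because it consists of projections of the uniformly bounded vectors $Q'(y)$, so $\|\pi_P(y^{-1})^*\eta\|\leq\|Q'(y)\|_2\leq\|\varphi\|_{\MoA(\G)}^{1/2}$. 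With both orbits sharply bounded by $\|\varphi\|_{\MoA(\G)}^{1/2}$, your Gilbert-converse argument then pins down the two equalities.
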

Note that the existence of a proper, left invariant metric with at most exponential growth on a second countable, locally compact group is guarantied by~\cite{HP:ProperMetricsOnLocallyCompactGroupsAndProperAffineIsometricActionsOnBanachSpaces}.

	\section{Coefficients of non-uniformly bounded representations}
\label{coeffOfNonUnifBd}
Second countability guarantees the existence of a proper, left invariant metric, cf.~\cite[Theorem]{Str:MetricsInLocallyCompactGroups}. Actually, according to Haagerup and Przybyszewska (cf.~\cite{HP:ProperMetricsOnLocallyCompactGroupsAndProperAffineIsometricActionsOnBanachSpaces}) one can choose this metric, $\dd$, to have \emph{at most exponential growth}, i.e.,
\begin{equation}
	\label{expgrowth}
	\mu(B_n(e))\leq a\cdot e^{b n}\qquad(n\in\N)
\end{equation}
for some constants $a,b>0$, where $\mu$ is a left invariant Haar measure on $\G$ and $B_n(e)=\setw{g\in\G }{ \dd(g,e)<n }$ is the open ball of radius $n$ centered around the identity element $e\in\G$.

Inspired by the proof of~\cite[Theorem~1.1]{BF:Herz-SchurMultipliersAndUniformlyBoundedRepresentationsOfDiscreteGroups}, we state and prove Theorem~\ref{mt} for \emph{Hermitian} Herz--Schur multipliers, i.e., Herz--Schur multipliers $\varphi$ for which $\varphi^*=\varphi$, where
\begin{equation*}
	\varphi^*(g)=\overline{\varphi(g^{-1})}\qquad(g\in\G).
\end{equation*}
The general (not necessarily Hermitian) case is treated in Corollary~\ref{mtc} and Theorem~\ref{mt2}.
\begin{theorem}
	\label{mt}
	If $\varphi$ is a continuous Hermitian Herz--Schur multiplier on a second countable, locally compact group $\G$, and $\dd$ is a proper, left invariant metric on $\G$ satisfying~\eqref{expgrowth} for some $a,b>0$ (which exists according to~\cite{HP:ProperMetricsOnLocallyCompactGroupsAndProperAffineIsometricActionsOnBanachSpaces}), then there exists a strongly continuous representation $(\pi,\Hil)$ and vectors $\xi,\eta\in\Hil$ such that
	\begin{equation*}
		\varphi(y^{-1}x)=\ip{\pi(x)\xi}{\pi(y)\eta}\qquad(x,y\in\G),
	\end{equation*}
	with
	\begin{equation*}
		\sup_{x\in\G}\|\pi(x)\xi\|=\|\varphi\|_{\MoA(\G)}^{\tfrac{1}{2}}\quad\mbox{and}\quad\sup_{y\in\G}\|\pi(y)\eta\|=\|\varphi\|_{\MoA(\G)}^{\tfrac{1}{2}}.
	\end{equation*}
	Moreover, for every fixed $c>b$, $(\pi,\Hil)$ can be chosen such that
	\begin{equation*}
		\|\pi(g)\|\leq\ e^{\tfrac{c}{2}\cdot\dd(g,e)}\qquad(g\in\G).
	\end{equation*}
\end{theorem}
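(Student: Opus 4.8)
The plan is to combine the Gilbert decomposition of Proposition~\ref{Gilbert0} with a weighted right--regular representation, using the metric weight simultaneously to force convergence and to bound the operator norms. First I would invoke Proposition~\ref{Gilbert0}(iii) (with $k=\|\varphi\|_{\MoA(\G)}$) to obtain a Hilbert space $\Hil_0$ and bounded continuous maps $P,Q:\G\to\Hil_0$ with $\varphi(y^{-1}x)=\ip{P(x)}{Q(y)}$ and, after rescaling, $\|P\|_\infty=\|Q\|_\infty=\|\varphi\|_{\MoA(\G)}^{1/2}$; in the Hermitian case one may moreover take $Q=JP$ for a symmetry $J=J^\adjoint=J^{-1}$ on $\Hil_0$. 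Fix $c>b$, put $w(h)=e^{-\frac c2\dd(h,e)}$, set $\Hil=L^2(\G,\mu;\Hil_0)$, and define
\[
(\pi(g)F)(h)=\Delta(g)^{1/2}\,\frac{w(h)}{w(hg)}\,F(hg),
\]
where $\Delta$ is the modular function. A direct cocycle computation shows that $\pi$ is a homomorphism into the invertible operators, with $\pi(g)^{-1}=\pi(g^{-1})$.

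The growth bound would be the first computation. Substituting $k=hg$ and using $\dd\mu(kg^{-1})=\Delta(g)^{-1}\dd\mu(k)$ gives $\|\pi(g)F\|^2=\int_\G \frac{w(kg^{-1})^2}{w(k)^2}\|F(k)\|^2\,\dd\mu(k)$, so $\|\pi(g)\|=\sup_k w(kg^{-1})/w(k)$. Here the left invariance of $\dd$ enters decisively: since $\dd(k,kg^{-1})=\dd(e,g^{-1})=\dd(g,e)$, the triangle inequality yields $\dd(k,e)-\dd(kg^{-1},e)\le\dd(g,e)$, whence $w(kg^{-1})/w(k)\le e^{\frac c2\dd(g,e)}$ and $\|\pi(g)\|\le e^{\frac c2\dd(g,e)}$, as required.

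Next I would produce the coefficient, the key idea being to twist the vectors by $\Delta^{-1/2}$ so that the modular factors cancel. Set $\xi=c_0\,w\,\Delta^{-1/2}P$ and $\eta=c_0\,w\,\Delta^{-1/2}Q$, where $c_0=(C')^{-1/2}$ and $C'=\int_\G w(h)^2\Delta(h)^{-1}\,\dd\mu(h)$. Because $\Delta^{-1}\dd\mu$ is a right Haar measure $\mu_r$ and the $\dd$-balls are symmetric ($\dd(h^{-1},e)=\dd(h,e)$), condition~\eqref{expgrowth} gives $\mu_r(B_n(e))=\mu(B_n(e))\le a e^{bn}$, so $C'<\infty$ for $c>b$ and $\xi,\eta\in\Hil$. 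Expanding $\ip{\pi(x)\xi}{\pi(y)\eta}$, the Gilbert relation collapses the integrand, since $\ip{P(hx)}{Q(hy)}=\varphi((hy)^{-1}(hx))=\varphi(y^{-1}x)$ independently of $h$, and the remaining factors reduce to $c_0^2\,w(h)^2\Delta(h)^{-1}$; hence $\ip{\pi(x)\xi}{\pi(y)\eta}=c_0^2 C'\,\varphi(y^{-1}x)=\varphi(y^{-1}x)$. For the orbit norms, the same substitution together with right invariance of $\Delta^{-1}\dd\mu$ gives $\|\pi(x)\xi\|^2=c_0^2\int_\G w(kx^{-1})^2\|P(k)\|^2\,\dd\mu_r(k)\le\|P\|_\infty^2=\|\varphi\|_{\MoA(\G)}$, and similarly for $\eta$.

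Finally, the matching lower bound comes for free: since $x\mapsto\pi(x)\xi$ and $y\mapsto\pi(y)\eta$ are bounded and realize $\varphi(y^{-1}x)$ as a coefficient, Proposition~\ref{Gilbert0} gives $\|\varphi\|_{\MoA(\G)}\le\sup_x\|\pi(x)\xi\|\cdot\sup_y\|\pi(y)\eta\|$; combined with the upper bounds this forces $\sup_x\|\pi(x)\xi\|=\sup_y\|\pi(y)\eta\|=\|\varphi\|_{\MoA(\G)}^{1/2}$. Strong continuity of $\pi$ then follows from continuity of $\Delta$, $w$ and $P,Q$ together with strong continuity of translation on $L^2(\G,\mu)$, checked first on $C_c(\G;\Hil_0)$ and extended by density. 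I expect the main obstacle to be the non-unimodularity: one must insert the modular function so that the coefficient identity holds \emph{exactly} (with no residual $\Delta(x)^{1/2}\Delta(y)^{1/2}$) while simultaneously keeping $\sup_x\|\pi(x)\xi\|$ finite, and one must verify that the integral $C'$ against right Haar measure converges --- which is precisely where the symmetry of the metric and the exponential growth~\eqref{expgrowth} are used.
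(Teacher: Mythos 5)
Your proposal is correct, but it takes a genuinely different route from the paper's proof of Theorem~\ref{mt}. The paper uses the Hermitian hypothesis in an essential way: it forms the kernels $a_\pm(x,y)=\tfrac14\ip{P(x)\pm Q(x)}{P(y)\pm Q(y)}$, whose difference equals $\varphi(y^{-1}x)$ precisely because $\varphi^*=\varphi$, builds two representations acting by \emph{left} translation, $\pi_\pm(g)(P\pm Q)'(x)=(P\pm Q)'(gx)$, on the cyclic subspaces $\Cpt_\pm\subseteq\ELL^2(\G,\Hil,\mu)$ spanned by weighted translates (Lemmas~\ref{lemma} and~\ref{newrep}), and realizes $\varphi$ through the direct sum $\pi=\pi_+\oplus\pi_-$ with $\xi=\xi_+\oplus\xi_-$, $\eta=\xi_+\oplus(-\xi_-)$. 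You instead write down a single explicit representation on all of $\ELL^2(\G,\mu;\Hil_0)$ --- a weighted right-regular representation with modular correction --- and take for $\xi,\eta$ the weighted, modular-twisted maps $c_0w\Delta^{-1/2}P$ and $c_0w\Delta^{-1/2}Q$ themselves rather than their translates; your cocycle bound $w(kg^{-1})/w(k)\le e^{\frac{c}{2}\dd(g,e)}$ plays the role of Lemma~\ref{lemma} (quasi-invariance of the weight, here with respect to right translation and the right Haar measure $\Delta^{-1}\dd\mu$, where you correctly exploit $\dd(g^{-1},e)=\dd(g,e)$, hence $\mu(B_n(e)^{-1})=\mu(B_n(e))\le ae^{bn}$, to get $C'<\infty$ for $c>b$). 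Your computations are sound: the homomorphism property, the exact cancellation of the modular factors in $\ip{\pi(x)\xi}{\pi(y)\eta}$, the growth bound, and the upgrade from $\le$ to $=$ via the converse direction of Proposition~\ref{Gilbert0} (a step the paper's own proof leaves implicit). What your route buys: since you never use $\varphi^*=\varphi$, your argument proves the stronger statement that \emph{every} continuous Herz--Schur multiplier is a coefficient $\ip{\pi(x)\xi}{\pi(y)\eta}$ with both suprema equal to $\|\varphi\|_{\MoA(\G)}^{1/2}$, thereby removing the factor $\sqrt2$ from Corollary~\ref{mtc}; it is in spirit closest to the paper's proof of Theorem~\ref{mt2}, which also works with $P,Q$ directly, except that there the left-translation representation on the cyclic subspace $\Cpt_P$ with $\eta=P_{\Cpt_P}Q'(e)$ only yields the $\pi_P(y^{-1})^*$-form, whereas your modular twist produces the genuine $\pi(y)\eta$-form. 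What the paper's route buys is a representation canonically generated by translates of the Gilbert pair, following Bo{\.z}ejko--Fendler's original discrete-group argument. Two minor points: your parenthetical claim that in the Hermitian case one may take $Q=JP$ for a symmetry $J$ is never used and is left unjustified, so it should be dropped; and your strong-continuity argument, like the paper's, relies on $g\mapsto\dd(g,e)$ being continuous at $e$, which the cited metric constructions do provide.
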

Before we proceed with the proof of Theorem~\ref{mt} we need the following application of the above mentioned result from~\cite{HP:ProperMetricsOnLocallyCompactGroupsAndProperAffineIsometricActionsOnBanachSpaces}, which was communicated to us by Haagerup.
\begin{lemma}
	\label{lemma}
	If $\G$ is a second countable, locally compact group, then there exists a positive function $h\in\ELL^1(\G)$ with $\|h\|_1=1$, and a positive function $\cc$ on $\G$ such that
	\begin{equation*}
		\frac{1}{\cc(g)}\int_\G f(z)h(z)\dd\mu(z)\leq\int_\G f(z)h(g z)\dd\mu(z)\leq\cc(g)\int_\G f(z)h(z)\dd\mu(z)
	\end{equation*}
	for $g\in\G$ and any positive $f\in\ELL^\infty(\G)$, where $\mu$ is the Haar measure on $\G$. Moreover, we may use
	\begin{equation*}
		h(g)=\frac{e^{-c\cdot\dd(g,e)}}{\int_\G e^{-c\cdot\dd(x,e)}\dd\mu(x)}\quad\mbox{and}\quad\cc(g)=e^{c\cdot\dd(g,e)}\qquad(g\in\G)
	\end{equation*}
	for $c>b$ when $\dd$ is a proper, left invariant metric on $\G$ satisfying~\eqref{expgrowth}.
\end{lemma}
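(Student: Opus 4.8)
The plan is to verify directly that the explicit choices of $h$ and $\cc$ proposed in the statement do the job; since the lemma only asks for the existence of a positive $h$ and a positive $\cc$ satisfying the sandwich inequality, exhibiting these concrete functions proves everything at once. First I would check that $h$ is well defined, i.e.\ that the normalizing constant $Z:=\int_\G e^{-c\cdot\dd(x,e)}\dd\mu(x)$ is finite and strictly positive. Finiteness is exactly where the growth hypothesis~\eqref{expgrowth} and the requirement $c>b$ enter: decomposing $\G$ into the annuli $B_{n+1}(e)\setminus B_n(e)$ (with $B_0(e)=\emptyset$), on which $\dd(\cdot,e)\geq n$, one obtains
\[
  Z \leq \sum_{n=0}^\infty e^{-cn}\,\mu(B_{n+1}(e)) \leq a e^{b}\sum_{n=0}^\infty e^{(b-c)n},
\]
a convergent geometric series. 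Positivity of $Z$ is immediate, since the integrand is positive and $\mu$ does not vanish on nonempty open sets. Thus $h\in\ELL^1(\G)$ is positive with $\|h\|_1=1$, and by left invariance of $\mu$ the function $z\mapsto h(gz)$ again lies in $\ELL^1(\G)$, so all integrals in the statement are finite for positive $f\in\ELL^\infty(\G)$.

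The heart of the argument is a pointwise comparison of $h(gz)$ with $h(z)$, for which it suffices to control $\dd(gz,e)-\dd(z,e)$. Here I would use left invariance of $\dd$ to rewrite $\dd(gz,e)=\dd(z,g^{-1})$ (multiplying both entries on the left by $g^{-1}$), and then the triangle inequality based at the point $z$ gives
\[
  \bigl|\dd(gz,e)-\dd(z,e)\bigr| = \bigl|\dd(z,g^{-1})-\dd(z,e)\bigr| \leq \dd(g^{-1},e) = \dd(g,e),
\]
the last equality again coming from left invariance (multiply by $g$ on the left) together with symmetry of the metric. Since $h(gz)/h(z)=e^{c(\dd(z,e)-\dd(gz,e))}$, exponentiating this displacement estimate yields the two-sided pointwise bound
\[
  \frac{1}{\cc(g)}\,h(z) \leq h(gz) \leq \cc(g)\,h(z) \qquad (z\in\G)
\]
with $\cc(g)=e^{c\cdot\dd(g,e)}$.

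Finally I would multiply this pointwise inequality by the positive function $f$ and integrate against $\dd\mu$; monotonicity of the integral turns the pointwise sandwich into the asserted sandwich between $\tfrac{1}{\cc(g)}\int_\G f h\,\dd\mu$ and $\cc(g)\int_\G f h\,\dd\mu$. I do not expect any serious obstacle: the only genuinely quantitative input is the convergence of $Z$, which is precisely what the at-most-exponential-growth condition~\eqref{expgrowth} buys once $c$ is taken strictly larger than $b$, while the displacement estimate is a formal consequence of left invariance and the triangle inequality. The one point to state carefully is that left invariance must be applied in the form $\dd(gz,e)=\dd(z,g^{-1})$, rather than in the form $\dd(gz,z)=\dd(z^{-1}gz,e)$ that would produce a conjugate distance not uniformly controlled by $\dd(g,e)$.
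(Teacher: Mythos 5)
Your proposal is correct and follows essentially the same route as the paper's proof: the same annular decomposition of $\G$ (with only a trivial shift in indexing) to show convergence of the normalizing integral under~\eqref{expgrowth} with $c>b$, the same reverse-triangle-inequality plus left-invariance argument giving the pointwise bound $\tfrac{1}{\cc(g)}h(z)\leq h(gz)\leq\cc(g)h(z)$, and the same final integration against $f$. No gaps.
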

\begin{proof}
	Let $\mu$ be a left invariant Haar measure on $\G$ and let $\dd$ be a proper, left invariant metric on $\G$ satisfying~\eqref{expgrowth}. We claim that
	\begin{equation*}
		0<\int_\G e^{-c\cdot\dd(g,e)}\dd\mu(g)<\infty.
	\end{equation*}
	 Put $E_1=B_1(e)$ and define inductively
	\begin{equation*}
		E_n=B_n(e)\setminus B_{n-1}(e)\qquad(n\geq2).
	\end{equation*}
	Then $\G=\bigsqcup_{n=1}^\infty E_n$ and
	\begin{equation*}
		e^{-c n}\leq e^{-c\cdot\dd(g,e)}\leq e^{-c(n-1)}\qquad(g\in E_n).
	\end{equation*}
	Hence,
	\begin{eqnarray*}
		\int_\G e^{-c\cdot\dd(g,e)}\dd\mu(g) & = & \sum_{n=1}^\infty\int_{E_n} e^{-c\cdot\dd(g,e)}\dd\mu(g)\\
		& \leq & \sum_{n=1}^\infty e^{-c(n-1)} \mu(E_n)\\
		& \leq & e^c\sum_{n=1}^\infty e^{-c n} \mu(B_n(e))\\
		& \leq & a e^c\sum_{n=1}^\infty e^{(b-c)n}\\
		& < & \infty
	\end{eqnarray*}
	because $c>b$.
	
	By the reverse triangle inequality we see that
	\begin{equation*}
		|\dd(z,g^{-1})-\dd(z,e)|\leq\dd(e,g^{-1})\qquad(g,z\in\G).
	\end{equation*}
	Using left invariance of the metric one finds that
	\begin{equation*}
		|\dd(g z,e)-\dd(z,e)|\leq\dd(g,e)\qquad(g,z\in\G).
	\end{equation*}
	This implies
	\begin{equation*}
		\frac{1}{\cc(g)}e^{-c\cdot\dd(z,e)}\leq e^{-c\cdot\dd(g z,e)}\leq\cc(g)e^{-c\cdot\dd(z,e)}\qquad(g,z\in\G),
	\end{equation*}
	which is easily seen to complete the proof.
\end{proof}
\begin{lemma}
	\label{newrep}
	Assume that $\Hil$ is a Hilbert space and that $R:G\to\Hil$ is bounded and continuous. Let $R':G\to\ELL^2(\G,\Hil,\mu)$ be given by
	\begin{equation*}
		R'(x)(z)=\sqrt{h(z)}R(z^{-1}x)\qquad(z\in\G)
	\end{equation*}
	for $x\in\G$, then $R'$ is bounded and continuous, with $\|R'(x)\|_2\leq\|R\|_\infty$ for all $x\in\G$. Also, let $\Cpt_R=\overline{\sspan}\setw{R'(x)}{x\in\G}$ be a sub-Hilbert space of $\ELL^2(\G,\Hil,\mu)$. Then there exists a unique representation $(\pi_R,\Cpt_R)$ such that
	\begin{equation*}
		\pi_R(g)R'(x)=R'(g x)\qquad(g,x\in\G).
	\end{equation*}
	Moreover,
	\begin{equation*}
		\|\pi_R(g)\|\leq e^{\tfrac{c}{2}\cdot\dd(g,e)}\qquad(g\in\G)
	\end{equation*}
	and the representation is strongly continuous.
\end{lemma}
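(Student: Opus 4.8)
The plan is to build $\pi_R$ as the left-translation representation on the concrete space $\ELL^2(\G,\Hil,\mu)$ and to read off all the required properties, including the growth bound, from the single estimate furnished by Lemma~\ref{lemma}.

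First I would check the assertions about $R'$ itself. Boundedness is immediate: $\|R'(x)\|_2^2=\int_\G h(z)\|R(z^{-1}x)\|^2\,\dd\mu(z)\le\|R\|_\infty^2\int_\G h(z)\,\dd\mu(z)=\|R\|_\infty^2$, using $\|h\|_1=1$. For continuity, fix $x_n\to x$ in $\G$ (sequences suffice since $\G$ is second countable); then $R(z^{-1}x_n)\to R(z^{-1}x)$ for each $z$ by continuity of $R$ and of multiplication, the integrand $h(z)\|R(z^{-1}x_n)-R(z^{-1}x)\|^2$ is dominated by the integrable function $4\|R\|_\infty^2 h(z)$, and dominated convergence gives $\|R'(x_n)-R'(x)\|_2\to0$.

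The crux is the estimate that makes $\pi_R$ simultaneously well defined, bounded, and exponentially controlled. For a finite combination $\Psi=\sum_i\alpha_i R'(x_i)$, set $F(z)=\sum_i\alpha_i R(z^{-1}x_i)\in\Hil$, so that $\Psi(z)=\sqrt{h(z)}\,F(z)$, while the proposed image satisfies $\big(\sum_i\alpha_i R'(g x_i)\big)(z)=\sqrt{h(z)}\,F(g^{-1}z)$. Substituting $w=g^{-1}z$ and using left invariance of $\mu$,
\[
\Big\|\sum_i\alpha_i R'(g x_i)\Big\|_2^2=\int_\G h(g w)\,\|F(w)\|^2\,\dd\mu(w).
\]
Since $w\mapsto\|F(w)\|^2$ is positive and bounded, the right-hand inequality of Lemma~\ref{lemma} applied with $f=\|F\|^2$ bounds this by $\cc(g)\int_\G h(w)\|F(w)\|^2\,\dd\mu(w)=\cc(g)\|\Psi\|_2^2$, and $\cc(g)=e^{c\,\dd(g,e)}$ turns this into
\[
\Big\|\sum_i\alpha_i R'(g x_i)\Big\|_2\le e^{\frac{c}{2}\dd(g,e)}\Big\|\sum_i\alpha_i R'(x_i)\Big\|_2.
\]
In particular the right side vanishing forces the left side to vanish, so $\pi_R(g)\Psi:=\sum_i\alpha_i R'(g x_i)$ depends only on $\Psi$ and not on the chosen representation; this defines a bounded operator on the dense span $\sspan\{R'(x)\}$ with $\|\pi_R(g)\|\le e^{\frac{c}{2}\dd(g,e)}$, which extends uniquely to $\Cpt_R$ (and maps $\Cpt_R$ into itself, since $\pi_R(g)R'(x)=R'(g x)\in\Cpt_R$). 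Evaluating $\pi_R(g_1)\pi_R(g_2)R'(x)=R'(g_1g_2x)=\pi_R(g_1g_2)R'(x)$ on the span and extending by continuity shows $\pi_R$ is a homomorphism with $\pi_R(e)=I$, hence each $\pi_R(g)$ is invertible with inverse $\pi_R(g^{-1})$; uniqueness is forced because the span is dense and representation operators are bounded.

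Finally, for strong continuity note that $g\mapsto\pi_R(g)R'(x)=R'(g x)$ is continuous by continuity of $R'$ and of $g\mapsto g x$, so strong continuity holds on the dense span. The only delicate point is that $\pi_R$ is not uniformly bounded, so I cannot extend to arbitrary $\Psi\in\Cpt_R$ by a global bound; instead I would use that $\dd(\cdot,e)$ is continuous, hence bounded on any compact neighbourhood $V$ of a fixed $g_0$, giving $M:=\sup_{g\in V}\|\pi_R(g)\|<\infty$, and run the standard three-epsilon argument: approximate $\Psi$ by $\Phi$ in the span with $\|\Psi-\Phi\|<\epsilon$, so that $\|\pi_R(g)\Psi-\pi_R(g_0)\Psi\|\le 2M\epsilon+\|\pi_R(g)\Phi-\pi_R(g_0)\Phi\|$ for $g\in V$, and let $g\to g_0$. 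The main obstacle is thus conceptual rather than computational: recognizing that the change of variables together with Lemma~\ref{lemma} yields well-definedness, boundedness of the homomorphism, and the growth bound at once, and that the local (not global) boundedness coming from properness of $\dd$ is exactly what is needed to upgrade strong continuity from the span to all of $\Cpt_R$.
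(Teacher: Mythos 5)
Your proposal is correct and follows essentially the same route as the paper's proof: continuity of $R'$ via dominated convergence, the key estimate $\|\sum_i \alpha_i R'(gx_i)\|_2^2\le \cc(g)\|\sum_i \alpha_i R'(x_i)\|_2^2$ obtained from Lemma~\ref{lemma} (you merely make the change of variables explicit), which yields well-definedness, the bound $\|\pi_R(g)\|\le e^{\frac{c}{2}\dd(g,e)}$, and then strong continuity from the dense span combined with local boundedness of $g\mapsto\sqrt{\cc(g)}$. The differences (explicit verification of the homomorphism property, the three-epsilon argument at an arbitrary $g_0$ rather than reducing to continuity at $e$) are purely expository.
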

\begin{proof}
	From Lebesgue's dominated convergence theorem it easily follows that $R'$ is continuous. To see that $R'$ is bounded, note that
	\begin{equation*}
		\|R'(x)\|_2^2=\int_\G h(z)\|R(z^{-1}x)\|^2\dd\mu(z)\leq\|R\|_\infty^2\qquad(x\in\G).
	\end{equation*}
	
	If $n\in\N$, $x_1,\ldots,x_n\in\G$ and $c_1,\ldots,c_n\in\C$, then Lemma~\ref{lemma} implies that
	\begin{equation*}
		\int_\G\|\sum_{i=1}^n c_iR(z^{-1}x_i)\|^2h(g z)\dd\mu(z)\leq\cc(g)\int_\G\|\sum_{i=1}^n c_iR(z^{-1}x_i)\|^2h(z)\dd\mu(z)
	\end{equation*}
	for $g\in\G$, where
	\begin{equation*}
		\cc(g)=e^{c\cdot\dd(g,e)}\qquad(g\in\G).
	\end{equation*}
	It follows that
	\begin{equation*}
		\|\sum_{i=1}^n c_iR'(g x_i)\|_2^2\leq\cc(g)\|\sum_{i=1}^n c_iR'(x_i)\|_2^2\qquad(g\in\G),
	\end{equation*}
	from which we conclude that there exists a unique representation $(\pi_R,\Cpt_R)$ of $\G$ such that
	\begin{equation*}
		\pi_R(g)R'(x)=R'(g x)\qquad(g,x\in\G).
	\end{equation*}
	Furthermore,
	\begin{equation*}
		\|\pi_R(g)\|\leq\sqrt{\cc(g)}\qquad(g\in\G).
	\end{equation*}
	We proceed to show that the representation is strongly continuous. Since $\sspan\setw{R'(x) }{ x\in\G}$ is total in $\Cpt_R$ and $\|\pi_R(g)\|\leq\sqrt{\cc(g)}$, where $g\mapsto\sqrt{\cc(g)}$ is a continuous function, it is enough to show that
	\begin{equation*}
		\lim_{n\to\infty}\pi_R(g_n)R'(x)=R'(x)
	\end{equation*}
	for $x\in\G$, when $(g_n)_{n\in\N}$ is a sequence converging to the identity $e\in\G$ (since $\G$ is second countable we do not have to consider nets). But $\pi_R(g_n)R'(x)=R'(g_n x)$ so this follows simply from continuity of $R'$.
\end{proof}
\begin{proof}[Proof of Theorem~\ref{mt}]
	Assume that $\varphi$ is a continuous Hermitian Herz--Schur multiplier and use Proposition~\ref{Gilbert0} to find a Hilbert space $\Hil$ and bounded, continuous maps $P,Q:\G\to\Hil$ such that
	\begin{equation*}
		\varphi(y^{-1}x)=\ip{P(x)}{Q(y)}\qquad(x,y\in G)
	\end{equation*}
	and
	\begin{equation*}
		\|P\|_\infty=\|Q\|_\infty=\|\varphi\|_{\MoA(\G)}^{\tfrac{1}{2}}.
	\end{equation*}
	Define
	\begin{equation*}
		a_\pm(x,y)=\frac{1}{4}\ip{P(x)\pm Q(x)}{P(y)\pm Q(y)}\qquad(x,y\in\G).
	\end{equation*}
	This gives rise to two positive definite, bounded kernels on $\G\times\G$ satisfying
	\begin{equation*}
		\label{1'}
		a_+(x,y)-a_-(x,y)=\frac{1}{2}\varphi(y^{-1}x)+\frac{1}{2}\varphi^*(y^{-1}x)=\varphi(y^{-1}x)\qquad(x,y\in\G)
	\end{equation*}
	and
	\begin{equation*}
		\label{3'}
		a_+(x,x)+a_-(x,x)=\frac{1}{2}\|P(x)\|^2+\frac{1}{2}\|Q(x)\|^2\leq\|\varphi\|_{\MoA(\G)}\qquad(x\in\G).
	\end{equation*}

	Let
	\begin{equation*}
		h(g)=\frac{e^{-c\cdot\dd(g,e)}}{\int_\G e^{-c\cdot\dd(x,e)}\dd\mu(x)}\qquad(g\in\G)
	\end{equation*}
	for some $c>b$, when $\dd$ is a proper, left invariant metric on $\G$ satisfying~\eqref{expgrowth} (cf.~Lemma~\ref{lemma}). Define $(P\pm Q)':\G\to\ELL^2(\G,\Hil,\mu)$ by
	\begin{equation*}
		(P\pm Q)'(x)(z)=\sqrt{h(z)}(P\pm Q)(z^{-1}x)\qquad(z\in\G)
	\end{equation*}
	for $x\in\G$. According to Lemma~\ref{newrep} there exist strongly continuous representations $(\pi_{P\pm Q},\Cpt_{P\pm Q})$, where $\Cpt_{P\pm Q}=\overline{\sspan}\setw{(P'\pm Q')(x)}{x\in\G}$ and $\pi_{P\pm Q}(g)(P\pm Q)'(x)=(P\pm Q)'(g x)$ for $g,x\in\G$. Furthermore, these representations satisfy
	\begin{equation}
		\label{claim}
		\|\pi_{P\pm Q}(g)\|\leq e^{\tfrac{c}{2}\cdot\dd(g,e)}\qquad(g\in\G).
	\end{equation}
	Put
	\begin{equation*}
		A_\pm(x,y)=\ip{(P\pm Q)'(x)}{(P\pm Q)'(y)}_{\Cpt_{P\pm Q}}\qquad(x,y\in\G).
	\end{equation*}
	Then $A_\pm$ are positive definite, bounded kernels on $\G\times\G$ satisfying
	\begin{equation}
		\label{1Coeff}
		A_+(x,y)-A_-(x,y)=\varphi(y^{-1}x)\qquad(x,y\in\G)
	\end{equation}
	and
	\begin{equation}
		\label{3}
		A_+(x,x)+A_-(x,x)\leq\|\varphi\|_{\MoA(\G)}\qquad(x\in\G).
	\end{equation}
	To make the notation less cumbersome, let $\pi_\pm=\pi_{P\pm Q}$ and $\Cpt_\pm=\Cpt_{P\pm Q}$ and define $\xi_\pm=(P\pm Q)'(e)$. Notice that
	\begin{equation*}
		\ip{\pi_\pm(x)\xi_\pm}{\pi_\pm(y)\xi_\pm}_{\Cpt_\pm}=A_\pm(x,y)\qquad(x,y\in\G),
	\end{equation*}
	and that~\eqref{claim} now reads
	\begin{equation*}
		\|\pi_\pm(g)\|\leq e^{\tfrac{c}{2}\cdot\dd(g,e)}\qquad(g\in\G).
	\end{equation*}
	Put
	\begin{equation*}
		\Cpt=\Cpt_+\oplus\Cpt_-,\quad\xi=\xi_+\oplus\xi_-,\quad\eta=\xi_+\oplus-\xi_-\quad\mbox{and}\quad\pi=\pi_+\oplus\pi_-.
	\end{equation*}
	Observe that $\pi$ is a strongly continuous representation such that
	\begin{equation}
		\label{pibd}
		\|\pi(g)\|\leq e^{\tfrac{c}{2}\cdot\dd(g,e)}\qquad(g\in\G)
	\end{equation}
	and
	\begin{equation*}
		\ip{\pi(x)\xi}{\pi(y)\eta}_\Cpt=\varphi(y^{-1}x)\qquad(x,y\in\G).
	\end{equation*}
	Finally, observe that
	\begin{equation*}
		\|\pi(x)\xi\|^2=\|\pi_+(x)\xi_+\|^2+\|\pi_-(x)\xi_-\|^2=A_+(x,x)+A_-(x,x)\leq\|\varphi\|_{\MoA(\G)}
	\end{equation*}
	for $x\in\G$, and similarly
	\begin{equation*}
		\|\pi(y)\eta\|^2\leq\|\varphi\|_{\MoA(\G)}
	\end{equation*}
	for $y\in\G$. This finishes the proof.
\end{proof}
\begin{corollary}
	\label{mtc}
	If $\varphi$ is a continuous Herz--Schur multiplier on a second countable, locally compact group $\G$, and $\dd$ is a proper, left invariant metric on $\G$ satisfying~\eqref{expgrowth} for some $a,b>0$ (which exists according to~\cite{HP:ProperMetricsOnLocallyCompactGroupsAndProperAffineIsometricActionsOnBanachSpaces}), then there exists a strongly continuous representation $(\pi,\Hil)$ and vectors $\xi,\eta\in\Hil$ such that
	\begin{equation*}
		\varphi(y^{-1}x)=\ip{\pi(x)\xi}{\pi(y)\eta}\qquad(x,y\in\G),
	\end{equation*}
	with
	\begin{equation*}
		\sup_{x\in\G}\|\pi(x)\xi\|\leq\sqrt{2}\|\varphi\|_{\MoA(\G)}^{\tfrac{1}{2}}\quad\mbox{and}\quad\sup_{y\in\G}\|\pi(y)\eta\|\leq\sqrt{2}\|\varphi\|_{\MoA(\G)}^{\tfrac{1}{2}}.
	\end{equation*}
	Moreover, for every fixed $c>b$, $(\pi,\Hil)$ can be chosen such that
	\begin{equation*}
		\|\pi(g)\|\leq e^{\tfrac{c}{2}\cdot\dd(g,e)}\qquad(g\in\G).
	\end{equation*}
\end{corollary}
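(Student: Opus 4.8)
The plan is to reduce the general case to the Hermitian case already settled in Theorem~\ref{mt} by splitting $\varphi$ into two Hermitian pieces. Writing $\varphi^*(g)=\overline{\varphi(g^{-1})}$ as before, I would set
\[
	\varphi_1=\tfrac{1}{2}(\varphi+\varphi^*)\quad\text{and}\quad\varphi_2=\tfrac{1}{2i}(\varphi-\varphi^*),
\]
so that $\varphi=\varphi_1+i\varphi_2$ and a direct check gives $\varphi_1^*=\varphi_1$ and $\varphi_2^*=\varphi_2$ (for $\varphi_2$ one uses that the scalar $\tfrac{1}{2i}$ conjugates to $-\tfrac{1}{2i}$, which cancels the sign flip in $(\varphi-\varphi^*)^*$). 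Both functions are continuous, since $g\mapsto g^{-1}$ and complex conjugation are continuous.

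Next I would establish that $\varphi_1$ and $\varphi_2$ are continuous Hermitian Herz--Schur multipliers with $\|\varphi_j\|_{\MoA(\G)}\leq\|\varphi\|_{\MoA(\G)}$ for $j=1,2$. The key point is the isometry $\|\varphi^*\|_{\MoA(\G)}=\|\varphi\|_{\MoA(\G)}$: if Proposition~\ref{Gilbert0} furnishes $P,Q:\G\to\Hil$ with $\varphi(y^{-1}x)=\ip{P(x)}{Q(y)}$ and $\|P\|_\infty\|Q\|_\infty\leq\|\varphi\|_{\MoA(\G)}$, then $\varphi^*(y^{-1}x)=\overline{\varphi(x^{-1}y)}=\overline{\ip{P(y)}{Q(x)}}=\ip{Q(x)}{P(y)}$, so interchanging the roles of $P$ and $Q$ realizes $\varphi^*$ with the same bound; applying this to $\varphi^*$ and using $\varphi^{**}=\varphi$ gives equality. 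Subadditivity and homogeneity of the norm then yield $\|\varphi_j\|_{\MoA(\G)}\leq\tfrac12(\|\varphi\|_{\MoA(\G)}+\|\varphi^*\|_{\MoA(\G)})=\|\varphi\|_{\MoA(\G)}$, and finiteness of the norm together with continuity makes each $\varphi_j$ a continuous Herz--Schur multiplier via Proposition~\ref{Gilbert0}.

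Applying Theorem~\ref{mt} to $\varphi_1$ and to $\varphi_2$ then yields strongly continuous representations $(\pi_1,\Cpt_1)$, $(\pi_2,\Cpt_2)$ and vectors $\xi_1,\eta_1,\xi_2,\eta_2$ with $\varphi_j(y^{-1}x)=\ip{\pi_j(x)\xi_j}{\pi_j(y)\eta_j}$, with $\sup_x\|\pi_j(x)\xi_j\|=\sup_y\|\pi_j(y)\eta_j\|=\|\varphi_j\|_{\MoA(\G)}^{1/2}$ and $\|\pi_j(g)\|\leq e^{(c/2)\dd(g,e)}$. I would form $\Hil=\Cpt_1\oplus\Cpt_2$, $\pi=\pi_1\oplus\pi_2$, $\xi=\xi_1\oplus\xi_2$ and $\eta=\eta_1\oplus(-i\eta_2)$. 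Since the inner product is conjugate-linear in the second slot, $\ip{\pi_2(x)\xi_2}{\pi_2(y)(-i\eta_2)}=i\,\ip{\pi_2(x)\xi_2}{\pi_2(y)\eta_2}=i\varphi_2(y^{-1}x)$, whence $\ip{\pi(x)\xi}{\pi(y)\eta}=\varphi_1(y^{-1}x)+i\varphi_2(y^{-1}x)=\varphi(y^{-1}x)$.

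Finally I would read off the remaining conclusions. Orthogonality of the summands gives $\|\pi(x)\xi\|^2=\|\pi_1(x)\xi_1\|^2+\|\pi_2(x)\xi_2\|^2\leq\|\varphi_1\|_{\MoA(\G)}+\|\varphi_2\|_{\MoA(\G)}\leq 2\|\varphi\|_{\MoA(\G)}$, and since $|-i|=1$ the identical bound holds for $\eta$; this is precisely where the factor $\sqrt{2}$ enters. The growth estimate $\|\pi(g)\|=\max(\|\pi_1(g)\|,\|\pi_2(g)\|)\leq e^{(c/2)\dd(g,e)}$ and strong continuity both pass to the orthogonal direct sum at once. The only genuinely substantive step is the norm invariance $\|\varphi^*\|_{\MoA(\G)}=\|\varphi\|_{\MoA(\G)}$ established in the second paragraph; everything else is bookkeeping with orthogonal direct sums.
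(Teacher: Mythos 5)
Your proposal is correct and takes essentially the same route as the paper: the paper's proof of Corollary~\ref{mtc} is exactly the decomposition $\varphi=\Re(\varphi)+i\Im(\varphi)$ with $\Re(\varphi)=\frac{\varphi+\varphi^*}{2}$ and $\Im(\varphi)=\frac{\varphi-\varphi^*}{2i}$, which are your $\varphi_1$ and $\varphi_2$, followed by an appeal to Theorem~\ref{mt}. The paper leaves implicit both the norm identity $\|\varphi^*\|_{\MoA(\G)}=\|\varphi\|_{\MoA(\G)}$ and the direct-sum assembly with the $-i$ adjustment of the second vector; you have supplied both of these details correctly.
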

\begin{proof}
	This follows from Theorem~\ref{mt} since
	\begin{equation*}
		\varphi=\Re(\varphi)+i\Im(\varphi),
	\end{equation*}
	where
	\begin{equation*}
		\Re(\varphi)=\frac{\varphi+\varphi^*}{2}\quad\mbox{and}\quad\Im(\varphi)=\frac{\varphi-\varphi^*}{2i}
	\end{equation*}
	are continuous Hermitian Herz--Schur multipliers with
	\begin{equation*}
		\|\Re(\varphi)\|_{\MoA(\G)}\leq\|\varphi\|_{\MoA(\G)}\quad\mbox{and}\quad\|\Im(\varphi)\|_{\MoA(\G)}\leq\|\varphi\|_{\MoA(\G)}.
	\end{equation*}
\end{proof}
\begin{maintheorem}
	\label{mt2}
	If $\varphi$ is a continuous Herz--Schur multiplier on a second countable, locally compact group $\G$, and $\dd$ is a proper, left invariant metric on $\G$ satisfying~\eqref{expgrowth} for some $a,b>0$ (which exists according to~\cite{HP:ProperMetricsOnLocallyCompactGroupsAndProperAffineIsometricActionsOnBanachSpaces}), then there exists a strongly continuous representation $(\pi,\Hil)$ and vectors $\xi,\eta\in\Hil$ such that
	\begin{equation*}
		\varphi(y^{-1}x)=\ip{\pi(x)\xi}{\pi(y^{-1})^*\eta}\qquad(x,y\in\G),
	\end{equation*}
	with
	\begin{equation*}
		\sup_{x\in\G}\|\pi(x)\xi\|=\|\varphi\|_{\MoA(\G)}^{\tfrac{1}{2}}\quad\mbox{and}\quad\sup_{y\in\G}\|\pi(y^{-1})^*\eta\|=\|\varphi\|_{\MoA(\G)}^{\tfrac{1}{2}}.
	\end{equation*}
	Moreover, for every fixed $c>b$, $(\pi,\Hil)$ can be chosen such that
	\begin{equation*}
		\|\pi(g)\|\leq e^{\tfrac{c}{2}\cdot\dd(g,e)}\qquad(g\in\G).
	\end{equation*}
\end{maintheorem}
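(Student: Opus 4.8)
The plan is to realize $\varphi$ directly as a genuine matrix coefficient. Observe first that the asserted identity is equivalent to $\varphi(g)=\ip{\pi(g)\xi}{\eta}$ for all $g\in\G$, because $\ip{\pi(x)\xi}{\pi(y^{-1})^{*}\eta}=\ip{\pi(y^{-1})\pi(x)\xi}{\eta}=\ip{\pi(y^{-1}x)\xi}{\eta}$. So I must produce a strongly continuous representation $(\pi,\Hil)$ with $\|\pi(g)\|\le e^{\tfrac{c}{2}\dd(g,e)}$ together with vectors $\xi,\eta$ realizing $\varphi$ as a coefficient and meeting the two orbit bounds.

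First I would invoke Proposition~\ref{Gilbert0} to get bounded continuous $P,Q:\G\to\Hil$ with $\varphi(y^{-1}x)=\ip{P(x)}{Q(y)}$ and $\|P\|_\infty=\|Q\|_\infty=\|\varphi\|_{\MoA(\G)}^{1/2}$. Working in the ambient space $\ELL^2(\G,\Hil,\mu)$, set $P'(x)(z)=\sqrt{h(z)}P(z^{-1}x)$ and $Q'(y)(z)=\sqrt{h(z)}Q(z^{-1}y)$ as in Lemma~\ref{newrep}, so that $\|P'(x)\|_2,\|Q'(y)\|_2\le\|\varphi\|_{\MoA(\G)}^{1/2}$. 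The crucial computation is the cross pairing: since $\ip{P(z^{-1}x)}{Q(z^{-1}y)}=\varphi\bigl((z^{-1}y)^{-1}z^{-1}x\bigr)=\varphi(y^{-1}x)$ is independent of $z$ and $\|h\|_1=1$, one obtains $\ip{P'(x)}{Q'(y)}_2=\varphi(y^{-1}x)$. Thus the task reduces to building a single representation $\pi$ for which $\pi(x)\xi=P'(x)$ (a bounded forward orbit) and $\pi(y^{-1})^{*}\eta=Q'(y)$ (a bounded adjoint orbit), whereupon $\ip{\pi(x)\xi}{\pi(y^{-1})^{*}\eta}=\ip{P'(x)}{Q'(y)}_2=\varphi(y^{-1}x)$.

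Lemma~\ref{newrep} supplies the shift representation $\pi_P$ on $\Cpt_P=\overline{\sspan}\{P'(x)\}$ with $\pi_P(x)P'(e)=P'(x)$ and $\|\pi_P(g)\|\le e^{\tfrac{c}{2}\dd(g,e)}$, so with $\xi=P'(e)$ the forward orbit $\pi_P(x)\xi=P'(x)$ is controlled. The $\eta$-side is the delicate point: because the shift $R'(x)\mapsto R'(gx)$ on $\ELL^2(\G,\Hil,\mu)$ satisfies $\sup_g\|U(g)^{*}\zeta\|=\infty$ for every $\zeta\neq0$ (a short computation with $h$), I cannot take $\pi$ to be a plain shift and $\eta=Q'(e)$. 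Instead I would realise $\eta$'s adjoint orbit as a \emph{forward} shift orbit: letting $\pi_Q$ be the shift representation attached to $Q$ and passing to its inverse-adjoint $\rho(g):=\pi_Q(g^{-1})^{*}$ (again strongly continuous, $\|\rho(g)\|\le e^{\tfrac{c}{2}\dd(g,e)}$), one has $\rho(y^{-1})^{*}Q'(e)=\pi_Q(y)Q'(e)=Q'(y)$, so $\eta=Q'(e)$ has adjoint orbit exactly $Q'(y)$, bounded by $\|\varphi\|_{\MoA(\G)}^{1/2}$. The compatibility that permits fusing $\pi_P$ and $\rho$ into one representation is the adjointness identity $\ip{P'(gx)}{Q'(y)}_2=\varphi(y^{-1}gx)=\ip{P'(x)}{Q'(g^{-1}y)}_2$, which says precisely that the forward shift of the $P'$-family and the co-shift $Q'(y)\mapsto Q'(g^{-1}y)$ of the $Q'$-family are formal adjoints of one another inside $\ELL^2(\G,\Hil,\mu)$.

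I expect the fusion to be the \textbf{main obstacle}: a naive direct sum of $\pi_P$ and $\rho$ puts $\xi$ and $\eta$ into orthogonal summands and annihilates the coefficient, so the coupling must be threaded through the cross pairing above — e.g. by defining $\pi$ on the closed span of both families and using the adjointness identity to verify that the prescribed action extends to a bounded operator with the stated norm. Granting this, strong continuity and the growth estimate are inherited from Lemmas~\ref{newrep} and~\ref{lemma} exactly as in the proof of Theorem~\ref{mt}. The sharp constants then come for free: the maps $x\mapsto\pi(x)\xi=P'(x)$ and $y\mapsto\pi(y^{-1})^{*}\eta=Q'(y)$ are bounded and continuous with $\ip{\pi(x)\xi}{\pi(y^{-1})^{*}\eta}=\varphi(y^{-1}x)$, so Proposition~\ref{Gilbert0}\,(iii)$\Rightarrow$(i) yields $\|\varphi\|_{\MoA(\G)}\le\sup_x\|\pi(x)\xi\|\cdot\sup_y\|\pi(y^{-1})^{*}\eta\|\le\|\varphi\|_{\MoA(\G)}$; since each factor is at most $\|\varphi\|_{\MoA(\G)}^{1/2}$ while their product equals $\|\varphi\|_{\MoA(\G)}$, both suprema must equal $\|\varphi\|_{\MoA(\G)}^{1/2}$.
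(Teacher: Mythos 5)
Your setup matches the paper's ingredients --- Proposition~\ref{Gilbert0}, the maps $P',Q'$, the cross pairing $\ip{P'(x)}{Q'(y)}=\varphi(y^{-1}x)$, and the closing argument upgrading the two inequalities to equalities --- but the step you flag as the ``main obstacle'' and then dismiss with ``Granting this'' is the entire content of the theorem, and your sketch for overcoming it does not work as stated. Prescribing $\pi(g)$ on the $P'$-family (forward shift) and $\pi(g)^*$ on the $Q'$-family (backward shift) does not determine an operator on the closed span $\mathcal M$ of both families: nothing specifies the action of $\pi(g)$ itself on the part of the $Q'$-vectors orthogonal to $\Cpt_P$, and any way of filling in this freedom must still be shown to be well defined, bounded by $e^{\frac{c}{2}\dd(g,e)}$, and multiplicative. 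The group law is a genuine obstruction here: writing $\Cpt_Q=\overline{\sspan}\setw{Q'(y)}{y\in\G}$, the natural completion (send $w\in\mathcal M\ominus\Cpt_P$ to the vector of $\Cpt_Q$ Riesz-representing the prescribed inner products against $\Cpt_Q$) maps $w$ to $P_{\Cpt_Q}w$ when $g=e$, so it already fails $\pi(e)=1$. Moreover, your demand $\pi(y^{-1})^*\eta=Q'(y)$ \emph{exactly} is stronger than what the theorem requires, and it is precisely this over-demand that creates the fusion problem.

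The paper's proof shows that no fusion and no second representation are needed: stay inside the single representation $(\pi_P,\Cpt_P)$ of Lemma~\ref{newrep} and take $\xi=P'(e)$ and $\eta=P_{\Cpt_P}Q'(e)$, where $P_{\Cpt_P}$ is the orthogonal projection onto $\Cpt_P$. The coefficient identity survives the projection because $P'(y^{-1}x)\in\Cpt_P$, so $\ip{P'(y^{-1}x)}{P_{\Cpt_P}Q'(e)}=\ip{P'(y^{-1}x)}{Q'(e)}=\varphi(y^{-1}x)$; and the adjoint orbit is identified by computing, for all $x\in\G$,
\begin{equation*}
	\ip{\pi_P(y^{-1})P'(x)}{\eta}=\ip{P'(y^{-1}x)}{Q'(e)}=\varphi(y^{-1}x)=\ip{P'(x)}{Q'(y)}=\ip{P'(x)}{P_{\Cpt_P}Q'(y)},
\end{equation*}
which, by totality of the $P'(x)$ in $\Cpt_P$, forces $\pi_P(y^{-1})^*\eta=P_{\Cpt_P}Q'(y)$ and hence $\|\pi_P(y^{-1})^*\eta\|\leq\|Q'(y)\|_2\leq\|\varphi\|_{\MoA(\G)}^{1/2}$. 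The conceptual point you are missing is that the adjoint of $\pi_P(g)$ is taken \emph{relative to the invariant subspace} $\Cpt_P$, i.e., it is the compression of the ambient adjoint; projecting the $Q'$-vectors into $\Cpt_P$ is therefore exactly what converts the unbounded ambient adjoint orbit you observe into a bounded family, while changing nothing in the pairing against $\Cpt_P$. With $\eta$ defined this way, your final appeal to Proposition~\ref{Gilbert0} (iii)$\Rightarrow$(i) for the sharp equalities is correct and can be kept verbatim.
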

\begin{proof}
	Assume that $\varphi$ is a continuous Herz--Schur multiplier and use Proposition~\ref{Gilbert0} to find a Hilbert space $\Hil$ and bounded, continuous maps $P,Q:\G\to\Hil$ such that
	\begin{equation*}
		\varphi(y^{-1}x)=\ip{P(x)}{Q(y)}\qquad(x,y\in G)
	\end{equation*}
	and
	\begin{equation*}
		\|P\|_\infty=\|Q\|_\infty=\|\varphi\|_{\MoA(\G)}^{\tfrac{1}{2}}.
	\end{equation*}
	Let
	\begin{equation*}
		h(g)=\frac{e^{-c\cdot\dd(g,e)}}{\int_\G e^{-c\cdot\dd(x,e)}\dd\mu(x)}\qquad(g\in\G)
	\end{equation*}
	for some $c>b$, when $\dd$ is a proper, left invariant metric on $\G$ satisfying~\eqref{expgrowth} (cf.~Lemma~\ref{lemma}). Define $P',Q':\G\to\ELL^2(\G,\Hil,\mu)$ by
	\begin{equation*}
		P'(x)(z)=\sqrt{h(z)}P(z^{-1}x)\quad\mbox{and}\quad Q'(y)(z)=\sqrt{h(z)}Q(z^{-1}y)\qquad(z\in\G)
	\end{equation*}
	for $x,y\in\G$. According to Lemma~\ref{newrep} there exists a strongly continuous representation $(\pi_P,\Cpt_P)$, where $\Cpt_P=\overline{\sspan}\setw{P'(x)}{x\in\G}$ and $\pi_P(g)P'(x)=P'(g x)$ for $g,x\in\G$. Furthermore, this representation satisfies
	\begin{equation*}
		\|\pi_P(g)\|\leq e^{\tfrac{c}{2}\cdot\dd(g,e)}\qquad(g\in\G).
	\end{equation*}
	Observe that
	\begin{equation*}
		\|P'(x)\|_2^2,\|Q'(y)\|_2^2\leq\|\varphi\|_{\MoA(\G)}
	\end{equation*}
	and
	\begin{equation*}
		\ip{P'(x)}{Q'(y)}_{\ELL^2(\G,\Hil,\mu)}=\int_\G h(z)\ip{P(z^{-1}x)}{Q(z^{-1}y)}_\Hil\dd\mu(z)=\varphi(y^{-1}x)
	\end{equation*}
	for $x,y\in\G$. Put $\xi=P'(e)$ and $\eta=P_{\Cpt_P} Q'(e)$, where $P_{\Cpt_P}$ is the orthogonal projection on $\Cpt_P$. Note that $\xi,\eta\in\Cpt_P$ and
	\begin{equation*}
		\varphi(y^{-1}x)=\ip{\pi_P(y^{-1}x)\xi}{\eta}_{\Cpt_P}=\ip{\pi_P(x)\xi}{\pi_P(y^{-1})^*\eta}_{\Cpt_P}\qquad(x,y\in\G).
	\end{equation*}
	It is clear that $\|\pi_P(x)\xi\|_{\Cpt_P}^2=\|P'(x)\|_2^2\leq\|\varphi\|_{\MoA(\G)}$. The corresponding result for $\|\pi_P(y^{-1})^*\eta\|_{\Cpt_P}^2$ requires more work. For $x\in\G$ arbitrary we find that
	\begin{eqnarray*}
		\ip{\pi_P(y^{-1})P'(x)}{P_{\Cpt_P} Q'(e)}_{\Cpt_P} & = & \ip{P'(y^{-1}x)}{P_{\Cpt_P} Q'(e)}_{\Cpt_P}\\
		& = & \ip{P'(y^{-1}x)}{Q'(e)}_\Hil\\
		& = & \varphi(y^{-1}x)\\
		& = & \ip{P'(x)}{Q'(y)}_\Hil\\
		& = & \ip{P'(x)}{P_{\Cpt_P} Q'(y)}_{\Cpt_P},
	\end{eqnarray*}
	from which we conclude that $\pi_P(y^{-1})^*P_{\Cpt_P} Q'(e)=P_{\Cpt_P} Q'(y)$ and therefore
	\begin{equation*}
		\|\pi_P(y^{-1})^*\eta\|_{\Cpt_P}^2=\|P_{\Cpt_P} Q'(y)\|_{\Cpt_P}^2\leq\|Q'(y)\|_2^2\leq\|\varphi\|_{\MoA(\G)}.
	\end{equation*}
\end{proof}
\begin{remark}
	For the free group on $N$ generators ($2\leq N<\infty$) the constants $a,b$ in~\eqref{expgrowth} may be chosen as $a=\frac{N}{(N-1)(2N-1)}$ and $b=\ln(2N-1)$. This implies that for $r>\sqrt{2N-1}$, the representations $(\pi,\Hil)$ from Theorem~\ref{mt}, Corollary~\ref{mtc} and Theorem~\ref{mt2} may be chosen to satisfy $\|\pi(g)\|\leq r^{\dd(g,e)}$ for all $g\in\G$.
\end{remark}

	\subsection*{Acknowledgments}
I would like to thank Ryszard Szwarc and Marek Bo{\.z}ejko for an inspiring stay in Wroc{\l}aw during March 2009.

	\bibliography{troelsBibliography}
	\contrib{Troels Steenstrup}{troelssj@imada.sdu.dk}\smallskip\\
Department of Mathematics and Computer Science, University of Southern Denmark, Campusvej~55, DK--5230~Odense~M, Denmark.\\

\end{document}